\def\draft{n}
\def\printname#1{
	\if\draft y
		\smash{\makebox[0pt]{\hspace{-0.5in}
			\raisebox{8pt}{\tt\tiny #1}}}
	\fi
}
\def\lbl#1{\label{#1}\printname{#1}}
\newtheorem{thm}{Theorem}[section]
\newtheorem{lem}[thm]{Lemma}
\newtheorem{prop}[thm]{Proposition}
\newtheorem{rem}[thm]{Remark}
\newcommand{\BZ}{{\mathbb{Z}}}
\newcommand{\BC}{{\mathbb{C}}}
\DeclareMathOperator{\Id}{Id}
\DeclareMathOperator{\PGL}{PGL}
\begin{document}

\title{On powers of half-twists in $M(0,2n)$}

\author{Gregor Masbaum}
\address{Institut de Math{\'e}matiques de Jussieu (UMR 7586 du CNRS)\\
Case 247\\
4 pl. Jussieu\\
75252 Paris Cedex 5\\
FRANCE }
\email{gregor.masbaum@imj-prg.fr}
\urladdr{webusers.imj-prg.fr/~gregor.masbaum}

\subjclass[2010]{20F38, 57M99, 57R56}

\thanks{Research supported in part by the center of excellence grant ``Center for Quantum Geometry of Moduli Spaces (QGM)'' DNRF95 from the Danish National Research Foundation.}

\date{October 13, 2016}

\begin{abstract} We use elementary skein theory to prove a version of a result of Stylianakis \cite{S} who showed that under mild restrictions on $m$ and $n$, the normal closure of the \hbox{$m$-th} power of a half-twist has infinite index in    the mapping class group of a sphere with $2n$ punctures.
\end{abstract}

\maketitle

\section{Introduction} \lbl{sec.intro}

Let $M(0,2n)$ be the mapping class group of the $2$-sphere $S^2$ fixing (setwise) a set of $2n$ points $p_1, \ldots, p_{2n} \in S^2$. It is well-known \cite{Bi} that $M(0,2n)$ is a quotient of the braid group $B_{2n}$ on $2n$ strands, where the braid generator $\sigma_i$  ($i=1, \ldots, 2n-1$) maps to the mapping class $h_i \in M(0,2n)$ which is a {\em half-twist} permuting $p_i$ and $p_{i+1}$ and fixing all other points $p_j$. Stylianakis recently showed the following:

\begin{thm}[Stylianakis \cite{S}] \lbl{1.1} For $2n \geq 6$ and $m\geq 5$, the normal closure of $h_i^m$ has infinite index in $M(0,2n)$.
\end{thm}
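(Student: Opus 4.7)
My approach is to construct a complex linear representation $\rho\colon M(0,2n)\to \GL(V)$ satisfying $\rho(h_i)^m=\Id$ together with $\rho(M(0,2n))$ infinite. Once such a $\rho$ is in hand, the normal closure $\langle\!\langle h_i^m\rangle\!\rangle$ is contained in $\ker\rho$, so $M(0,2n)/\langle\!\langle h_i^m\rangle\!\rangle$ surjects onto an infinite group and thus has infinite index.

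The representation $\rho$ is built from Kauffman bracket skein theory at a parameter $A\in\BC^\times$. Take $V$ to be a quotient of the Temperley--Lieb module for the disk with $2n$ marked boundary points, reduced modulo the sphere-closure relation (equivalently, the Kauffman skein module of the $2n$-punctured sphere). The braid group $B_{2n}$ acts on $V$ via $\sigma_i\mapsto A\,\Id+A^{-1}e_i$, so on any two-dimensional Temperley--Lieb block the half-twist $h_i$ has eigenvalues $A$ and $-A^{-3}$. I would choose $A$ a root of unity (perhaps after a harmless rescaling of $\sigma_i$ in the odd case) so that both eigenvalues satisfy $x^m=1$; for instance, when $m$ is even a primitive $m$-th root of unity works directly. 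This gives $\rho(h_i)^m=\Id$. The descent from $B_{2n}$ to $M(0,2n)$ is a routine skein-theoretic bookkeeping step: cap off the disk boundary with a diagram carrying the Jones--Wenzl idempotent and check that both the sphere relator and the centre of $B_{2n}$ act by a scalar. The restriction $m\geq 5$ enters as the mild requirement that $A$ can be chosen away from the finitely many ``bad'' roots of unity at which this skein construction degenerates.

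The main obstacle is showing $\rho(M(0,2n))$ is infinite. Dehn twists act on $V$ diagonally by powers of $A$, hence always by torsion, so an element of infinite order in the image must come from a more delicate combination. Using the hypothesis $2n\geq 6$, choose two simple closed curves $\gamma_1,\gamma_2\subset S^2\setminus\{p_1,\ldots,p_{2n}\}$, each bounding a disk containing three punctures, but with positive geometric intersection number; the composition $T_{\gamma_1}T_{\gamma_2}$ is then pseudo-Anosov. On a low-dimensional $\rho$-invariant subspace of $V$ spanned by non-crossing arc diagrams compatible with $\gamma_1$, I would write $\rho(T_{\gamma_1}T_{\gamma_2})$ as an explicit matrix over $\BZ[A,A^{-1}]$, compute its characteristic polynomial, and verify that for every root of unity $A$ corresponding to an admissible $m\geq 5$ this polynomial has a root off the unit circle in some Galois embedding. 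This uniform numerical check, valid across all $m\geq 5$, is the heart of the argument; it is also where one expects small $m$ (where the normal closure can have finite index) to fail the estimate.
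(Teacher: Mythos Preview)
Your overall strategy---build a skein-theoretic projective representation of $M(0,2n)$ in which $h_i^m$ acts trivially but the image is infinite---is exactly the paper's strategy (and Stylianakis'). Two substantive differences and one genuine gap deserve comment.

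\textbf{Projective versus linear.} You aim for a linear representation with $\rho(h_i)^m=\Id$, remarking that a ``harmless rescaling'' handles the odd-$m$ case. It is not harmless: the sphere relators $R_1,R_2$ in Birman's presentation act by the scalars $A^6$ and $(-A^3)^{2n}$, so any rescaling $\hat\rho(\sigma_i)=\theta^{-1}\rho(\sigma_i)$ that kills these forces $\theta^{4n-2}=A^6$, and then $(\theta A)^{-m}\neq 1$ whenever $m$ is odd and $n$ is even. The paper avoids this by working in $\PGL_d(\BC)$ throughout; you should too.

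\textbf{Choice of infinite-order element.} The paper uses $\phi=\sigma_1^2\sigma_2^{-2}$, which preserves a $2$-dimensional skein subspace (coming from the first four marked points) and yields an explicit $2\times 2$ matrix of determinant $1$ and trace $t=2-q-q^{-1}+q^2+q^{-2}$ with $q=A^4$. One then checks $|t|>2$ (or $t=2$ with nontrivial unipotent part) for suitable $A$, giving infinite order for all $m\geq 6$ and even allowing $2n\geq 4$. Your element $T_{\gamma_1}T_{\gamma_2}$ with three-puncture $\gamma_i$ requires $2n\geq 6$ and lives on at least a $5$-dimensional block; this is more work, though it is indeed what is needed to reach $m=5$ (the paper explicitly leaves the $m=5$ case to such a $5\times 5$ computation, citing Humphries and Stylianakis).

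\textbf{The gap.} The heart of your argument is the sentence ``verify that for every root of unity $A$ corresponding to an admissible $m\geq 5$ this polynomial has a root off the unit circle in some Galois embedding.'' You have not done this verification, and it is not automatic: for small $m$ the relevant matrices genuinely have finite order (e.g.\ $M(0,4)/(h_i^5=1)\cong A_5$), and even for $m\geq 6$ the paper has to exclude the orders $r\in\{4,6,10\}$ of $q=A^4$ by a careful choice of $A$. Until you exhibit the matrix, compute its characteristic polynomial, and carry out the trace or eigenvalue estimate uniformly in $m$, the proposal is an outline rather than a proof.
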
  
 (Note that the normal closure does not depend on $i$, as the $h_i$ are all conjugate.)

For $2n=6$, this result was known and is due to Humphries \cite{H}, as it is equivalent (by the Birman-Hilden Theorem) to Humphries' result \cite[Thm. 4]{H} that the normal closure of the $m$-th power of a non-separating Dehn twist has infinite order in the genus $2$ mapping class group for $m\geq 5$. Humphries' method was to employ the Jones representation \cite{J} of the genus $2$ mapping class group together with an explicit computation. Stylianakis' generalization proceeds by using certain Jones representations of $M(0,2n)$, but his proof involves some non-trivial representation theory.

 In this paper, we give an elementary skein-theoretic proof of the following:

\begin{thm} \lbl{1.2} For $2n \geq 4$ and $m\geq 6$, the normal closure of $h_i^m$ has infinite index in $M(0,2n)$.
\end{thm}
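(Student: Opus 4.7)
The plan is to construct a projective representation $\bar\rho\colon M(0,2n) \to \PGL(\BV)$ with infinite image such that $\bar\rho(h_i^m) = 1$. Given such $\bar\rho$, the normal closure of $h_i^m$ is contained in $\ker\bar\rho$, which has infinite index in $M(0,2n)$, and the theorem follows. Following the paper's emphasis on skein theory, the representation will come from the Kauffman bracket at a suitable complex parameter $A$.

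The natural construction is to take $\BV$ to be the Kauffman bracket/Temperley--Lieb module of a disk with $2n$ marked boundary points. The braid generator $\sigma_i$ acts as $\rho(\sigma_i) = A\cdot\Id + A^{-1} e_i$, where $e_i$ is the local cup-cap generator with $e_i^2 = (-A^2 - A^{-2})\,e_i$. A direct computation gives the eigenvalues of $\rho(\sigma_i)$ as $A$ and $-A^{-3}$, so $\rho(\sigma_i^m)$ is a scalar matrix precisely when $A^{4m} = (-1)^m$. For any $m \geq 6$ one can arrange this by choosing $A$ to be a primitive $(4m)$-th root of unity when $m$ is even, and a primitive $(8m)$-th root of unity when $m$ is odd; at such $A$ the projective image $\bar\rho(h_i^m)$ is trivial. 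To descend the braid action from $B_{2n}$ to $M(0,2n)$, one must check that Birman's sphere relations act as scalars on $\BV$ at the chosen roots of unity; this is standard in skein-theoretic TQFT once one contracts with the appropriate Jones--Wenzl idempotent closing off the boundary circle to produce a sphere.

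The main obstacle is proving that $\bar\rho(M(0,2n))$ is infinite, and this is the step where genuine skein-theoretic work must be done. My approach is to exhibit an explicit mapping class $\phi \in M(0,2n)$ -- for instance a product of two distant half-twists such as $h_1 h_3^{-1}$ (available once $2n \geq 4$), or a Dehn twist along a simple closed curve enclosing an appropriate subset of the punctures -- and to show that $\bar\rho(\phi)$ has infinite order. For such short braid words the Temperley--Lieb relations make the eigenvalues of $\rho(\phi)$ computable as explicit algebraic expressions in $A$, and the key verification is that at the chosen root of unity at least one eigenvalue is not itself a root of unity. The hypothesis $m \geq 6$ should enter precisely to guarantee that the selected $A$ sits deep enough in the cyclotomic hierarchy for this non-torsion condition to hold; making this final check clean -- ideally by reducing it to an identity between Chebyshev-type polynomials in $A$ -- is the technical heart of the argument, but should remain within the bounds of elementary skein theory advertised in the abstract.
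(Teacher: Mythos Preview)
Your overall strategy---build a projective Kauffman-bracket representation of $M(0,2n)$ in which $h_i^m$ becomes trivial but the image is infinite---is exactly the paper's, and your eigenvalue condition $A^{4m}=(-1)^m$ is equivalent to the paper's condition $P_m(A)=0$.

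However, your proposed witness $\phi = h_1 h_3^{-1}$ for infinite image does \emph{not} work, and this is a genuine gap. The generators $\sigma_1$ and $\sigma_3$ commute in $B_{2n}$, and each $\rho(\sigma_i)$ is semisimple (since $e_i^2=\delta e_i$ with $\delta=-A^2-A^{-2}\neq 0$ whenever $A^4\neq -1$) with eigenvalues among $\{A,-A^{-3}\}$ in your convention. Commuting diagonalizable operators are simultaneously diagonalizable, so every eigenvalue of $\rho(\sigma_1\sigma_3^{-1})$ is a ratio of roots of unity, hence itself a root of unity; thus $\rho(h_1 h_3^{-1})$ has \emph{finite} order in $\PGL$. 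In fact for $2n=4$ one checks directly that $E_1=E_3$ as operators on the $2$-dimensional skein module, so $\rho(h_1 h_3^{-1})=\Id$ outright. The paper instead takes $\phi=\sigma_1^2\sigma_2^{-2}$, where $\sigma_1$ and $\sigma_2$ do \emph{not} commute; restricting to an explicit $2$-dimensional invariant subspace, the resulting $2\times 2$ matrix has determinant $1$ and trace $t=2-q-q^{-1}+q^2+q^{-2}$ with $q=A^4$, and one can choose a primitive $r$-th root $q$ with $|t|>2$ whenever $r\geq 3$ and $r\notin\{4,6,10\}$, forcing infinite order.

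Two smaller points. First, descent to $M(0,2n)$ is handled in the paper by directly verifying that Birman's relators $R_1,R_2$ act by the scalars $(-A^3)^2$ and $(-A^3)^{2n}$; no Jones--Wenzl closure or TQFT input is needed. Second, your blanket choice of $A$ (primitive $4m$-th or $8m$-th root) must be refined: for $m=6$ your recipe gives $q=A^4$ of order $6$, and for $m=10$ of order $10$, both excluded by the trace criterion above. The paper treats these separately, taking $A$ of order $12$ (so $r=3$) when $m=6$ and of order $20$ (so $r=5$) when $m=10$.
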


The key point in the proof of Theorem~\ref{1.2} is a simple $2\times 2$ matrix calculation that I essentially did in \cite{Madeira}. Note that Theorem~\ref{1.2} implies Stylianakis' result for $m\geq 6$. Theorem~\ref{1.2} does not hold when $m=5$ and  $2n=4$, as $M(0,4)/(h_i^5=1)$ is a finite group (the alternating group $A_5$).   I believe that the remaining case ($m=5$, $2n\geq 6$) of Stylianakis' theorem can also be proved using the skein-theoretic method exposed below, but it would require a calculation with $5\times 5$ matrices which I have not done (see Remark~\ref{remaining}).

\section{Strategy of the proof}

The proof  will be based on the  representation of the braid group $B_{2n}$ on  the Kauffman bracket  \cite{K} skein module 
of the $3$-ball relative to $2n$ marked points on the boundary. We will show that for an appropriate choice of Kauffman's skein variable $A$, this representation induces a projective-linear representation  $$\rho : M(0,2n) \rightarrow \PGL_d(\BC)$$ (where $d$ depends on $n$) so that 
\begin{enumerate}
\item[(i)] $\rho(h_i^m)=1$, and
\item[(ii)] the image $\rho(M(0,2n))$ is an infinite group. 
\end{enumerate} Clearly this will imply that the normal closure of $h_i^m$ has infinite index in $M(0,2n)$.

\begin{rem}{\em Stylianakis used the same strategy applied to a certain Jones representation of $M(0,2n)$.  Actually, up to normalization and change of variables, the representation $\rho$ is equivalent to the Jones representation for the rectangular Young diagram with $2$ rows of length $n$. (We shall not make use of this fact in this paper.) For the purpose at hand, I find the skein-theoretic approach much easier.}\end{rem}
  
\begin{rem}{\em Funar \cite{F} showed that the normal closure of the $m$-th power of a Dehn twist has infinite index in the mapping class group of a genus $g$ surface (with some restrictions on $m$ and $g$) using the above strategy applied to TQFT-representations of mapping class groups.  Our representation $\rho$ can also be viewed as a TQFT representation of $M(0,2n)$. But for us, TQFT is not actually needed. We shall only need Birman's presentation \cite[Thm.~4.5]{Bi} of $M(0,2n)$ as a quotient of $B_{2n}$ and elementary skein theory.
}\end{rem}

\section{Proof of Theorem~\ref{1.2}}

We start with the  representation of the braid group $B_{2n}$ on  the Kauffman bracket skein module 
of the $3$-ball relative to $2n$ marked points on the boundary. Let us recall how this representation, which we denote by $\rho$, is defined. The skein module is a free $\BZ[A,A^{-1}]$-module of dimension $$d=\frac 1 {n+1} {2n \choose n}$$ (the Catalan number). Its elements are represented by $\BZ[A,A^{-1}]$-linear combinations of  $(0,2n)$-tangle diagrams, that is, tangle diagrams in a rectangle relative to $2n$ marked points at the top of the rectangle. The diagrams are considered modulo the Kauffman skein relations (which will be stated shortly).  The skein  module has a standard basis given by tangle diagrams without crossings and without closed circles. For example, if the number of points is $2n=4$, the dimension is $d=2$ and the basis is given by the two diagrams 
$$D_1 =\ \ \ \begin{minipage}{0.2in}\includegraphics[width=0.4in,height=0.16in]{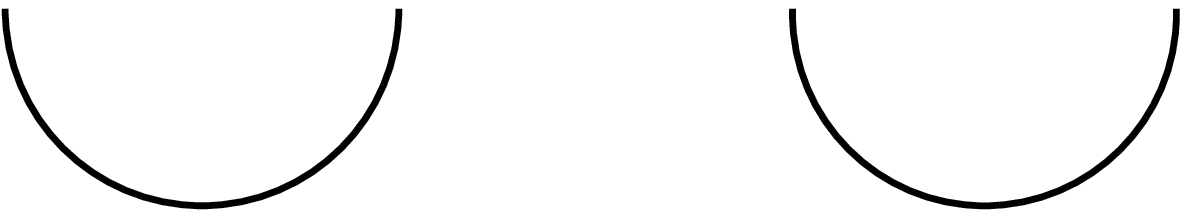}\end{minipage} \hskip 2cm  D_2= \ \ \ \begin{minipage}{0.2in} \includegraphics[width=0.4in,height=0.16in]{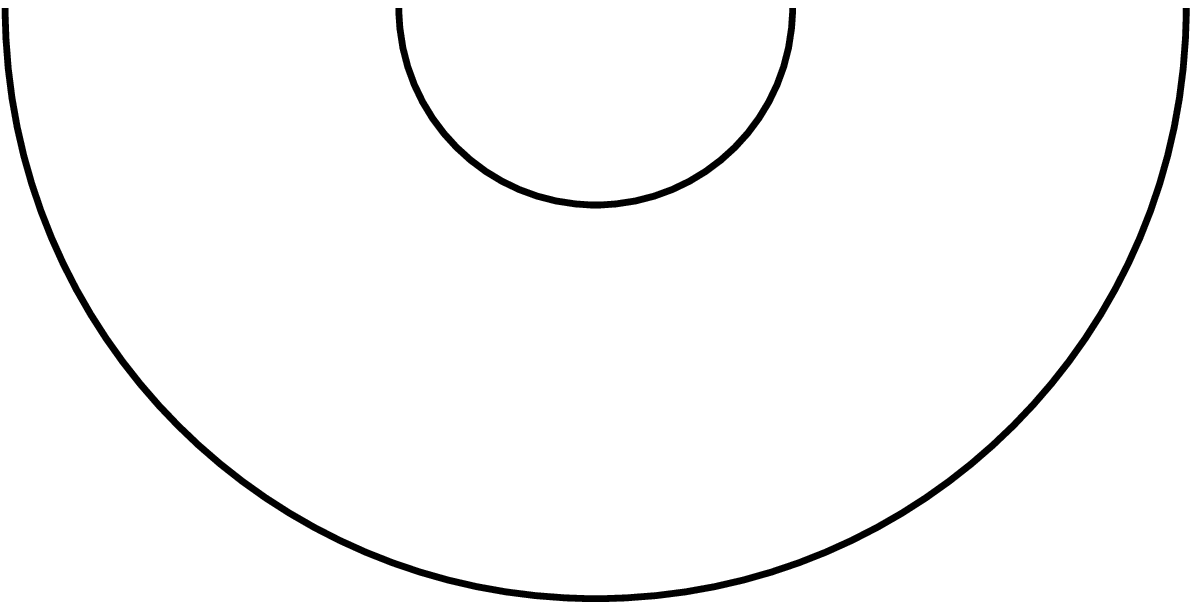}\end{minipage}$$ Below we specialize $A$ to a non-zero complex number, so that  the skein module with this basis (ordered in some arbitrary fashion) is identified
 with $\BC^d$. 

 The $i$-th braid generator $\sigma_i$ acts on a diagram $D$ by gluing the usual braid diagram of $\sigma_i^{-1}$ on top of $D$ (that is, the braid diagram  which has a crossing  $\includegraphics[width=0.1in]{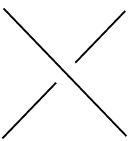}$ at the $i$-th and $(i+1)$-st strand and all other strands are vertical). (We use inverses here so as to get a left action of $B_{2n}$ on the skein module.) The Kauffman bracket skein relation 
\[ 
\begin{minipage}{0.2in}\includegraphics[width=0.2in]{L+2.eps}\end{minipage}
=  
 \ \ A \  \ 
\begin{minipage}{0.2in}\includegraphics[width=0.2in]{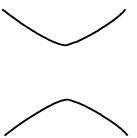} \end{minipage}
+
\ \  A^{-1} \  \ \begin{minipage}{0.2in} \includegraphics[width=0.2in]{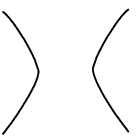}  \end{minipage}
   \]
implies that $$\rho(\sigma_i) =  A \,  \rho (E_i) + A^{-1} \Id ~,$$ where $E_i$ has $\includegraphics[width=0.1in]{L0.eps}$ at the appropriate place and all other strands are vertical.  The second Kauffman skein relation, which fixes the value of an unknot diagram to $-A^2-A^{-2}$,  implies that $$\rho (E_i)^2 = (-A^2-A^{-2})\,  \rho (E_i)~.$$ A simple recursion now establishes that $$ \rho(\sigma_i^m)= P_m(A) \,  \rho (E_i) + A^{-m} \Id ~,$$ where $P_m(A)= A^{2-m} (1 - A^4 + A^8 - \ldots +(-1)^{m-1} A^{4m-4}$). Thus we have the following

\begin{prop}\lbl{3.1} If $A\in \BC$ satisfies $P_m(A)=0$, then $\rho(\sigma_i^m)=A^{-m}\Id$ is the identity element in $\PGL_d(\BC)$.
\end{prop}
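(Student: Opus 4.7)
The proof plan is essentially a one-line deduction from the formula
\[ \rho(\sigma_i^m) = P_m(A)\,\rho(E_i) + A^{-m}\Id \]
that the author has just asserted via ``simple recursion.'' To make this fully precise, I would first spell out that recursion: the base case is $\rho(\sigma_i) = A\,\rho(E_i) + A^{-1}\Id$, and for the inductive step one multiplies $\rho(\sigma_i^m)$ on the left by $\rho(\sigma_i)$, expands the product, and collapses the $\rho(E_i)^2$ term using $\rho(E_i)^2 = (-A^2 - A^{-2})\rho(E_i)$. This yields the scalar recursion $P_{m+1}(A) = -A^3 P_m(A) + A^{1-m}$, and a direct check shows that the explicit polynomial $P_m(A) = A^{2-m}(1 - A^4 + A^8 - \cdots + (-1)^{m-1}A^{4m-4})$, with initial value $P_1(A) = A$, satisfies it.

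Once the formula is in hand, the proposition is immediate. Under the hypothesis $P_m(A) = 0$, the term $P_m(A)\,\rho(E_i)$ vanishes and we are left with $\rho(\sigma_i^m) = A^{-m}\Id$. Since $A$ is specialized to a nonzero complex number, $A^{-m}$ is a nonzero scalar, so $A^{-m}\Id \in \GL_d(\BC)$ projects to the identity in $\PGL_d(\BC)$.

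There is no real obstacle here: the content of the proposition is entirely encoded in the formula for $\rho(\sigma_i^m)$ that the preceding paragraph establishes. The only minor points worth checking are (a) that the inductive computation really does collapse into the stated alternating polynomial (a routine bookkeeping of signs and powers of $A$), and (b) that $A \neq 0$, which is automatic since $A=0$ is not a root of any $P_m$ with $m \geq 1$. No further ingredients are needed.
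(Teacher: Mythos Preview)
Your proposal is correct and is exactly the approach the paper takes: the paper states the formula $\rho(\sigma_i^m)=P_m(A)\,\rho(E_i)+A^{-m}\Id$ (obtained by the recursion you describe) and then records the proposition as an immediate consequence, without a separate proof. Your write-up simply makes the ``simple recursion'' and the passage to $\PGL_d(\BC)$ explicit.
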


From now on, we assume that $A$ is a zero of the polynomial $P_m(A)$. Note that all zeros of $P_m(A)$ are roots of unity. We shall make a precise choice of $A$ later.

\begin{prop}\lbl{3.2} For any $A\in \BC^*$, the homomorphism   $\rho: B_{2n} \rightarrow \PGL_d(\BC)$  factors through $M(0,2n)$.
\end{prop}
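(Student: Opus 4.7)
The plan is to invoke Birman's presentation \cite[Thm.~4.5]{Bi} of $M(0,2n)$ as the quotient of $B_{2n}$ by the two extra relations
\[
r_1 = \sigma_1 \sigma_2 \cdots \sigma_{2n-2}\, \sigma_{2n-1}^2 \,\sigma_{2n-2} \cdots \sigma_2 \sigma_1, \qquad
r_2 = (\sigma_1 \sigma_2 \cdots \sigma_{2n-1})^{2n},
\]
and to verify that $\rho(r_1)$ and $\rho(r_2)$ both land in the scalar subgroup of $\GL_d(\BC)$, so that their images in $\PGL_d(\BC)$ are trivial.

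The geometric idea for both is the same. I would view a $(0,2n)$-tangle as sitting inside a $3$-ball whose boundary $2$-sphere carries the $2n$ marked points. Stacking $r_1$ on top of a tangle $T$ wraps strand $1$ once around all the remaining strands; since this loop is trivial in $M(0,2n)$, it can be swept across the ``back'' of the $3$-ball, giving a framed isotopy from $r_1 \cdot T$ to $T$ at the cost of a definite framing change on strand $1$ whose amount depends on $r_1$ alone and not on $T$. The Kauffman bracket scales a framing change by $(-A^3)$ per positive twist, so $\rho(r_1) = \lambda_1\, \Id$ for some $\lambda_1 \in \BC^*$. The same sweeping argument applied to the full twist $r_2 = \Delta^2$, which realises a global rotation of the sphere, yields $\rho(r_2) = \lambda_2\, \Id$.

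A cleaner, more algebraic route exists for $r_2$ alone: being central in $B_{2n}$, $\rho(r_2)$ commutes with all of $\rho(B_{2n})$, and for generic $A$ the $(0,2n)$-skein is the irreducible $(n,n)$-cell module of the Temperley--Lieb algebra $TL_{2n}(A)$, so Schur's lemma forces $\rho(r_2)$ to be scalar; since being a scalar matrix is a Zariski-closed condition on $A$ and the entries of $\rho(r_2)$ are Laurent polynomials in $A$, the conclusion extends to every $A \in \BC^*$. The main obstacle is making the ``sweeping'' argument for $r_1$ rigorous within elementary skein theory: one must account explicitly for the framing shift that strand~$1$ incurs as the wrapping arc is pulled across the $3$-ball, and confirm that this shift is the same on every basis tangle. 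A direct alternative would be to expand $r_1$ using the relations $\rho(\sigma_i) = A\,\rho(E_i) + A^{-1}\Id$ and $\rho(E_i)^2 = (-A^2-A^{-2})\,\rho(E_i)$ already established above, but this becomes combinatorially heavy for large $n$.
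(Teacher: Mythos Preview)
Your approach is essentially the same as the paper's: invoke Birman's presentation and show that $\rho(r_1)$ and $\rho(r_2)$ are scalar matrices by using the isotopy invariance of the Kauffman bracket (your ``sweeping'' argument is exactly this isotopy). The paper is simply terser---it states the explicit scalars $\rho(R_1)=(-A^3)^2\Id$ and $\rho(R_2)=(-A^3)^{2n}\Id$ and cites \cite[\S1.3]{Sa} for the verification---whereas you describe the geometric picture without computing the scalars; your Schur/Zariski alternative for $r_2$ is a valid extra route that the paper does not take, but it is not needed since the framing argument already handles both relations uniformly.
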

\begin{proof}  This is well-known but here is a proof. The group $M(0,2n)$ is the quotient of $B_{2n}$ by the relations $R_1=R_2=1$ where $$R_1= \sigma_1 \sigma_2 \cdots \sigma_{2n-1}\sigma_{2n-1} \sigma_{2n-2}\cdots \sigma_1$$ $$R_2= (\sigma_1 \sigma_2 \cdots \sigma_{2n-1})^{2n}$$ (see \cite[Thm~4.5]{Bi}). Using the isotopy invariance of the Kauffman bracket, it is easy to check that $$\rho(R_1)= (-A^3)^2 \Id$$ $$ \rho(R_2) = (-A^3)^{2n} \Id$$ (see \cite[\S 1.3]{Sa}). This proves the proposition.\end{proof}

\begin{rem}{\em For appropriate roots of unity $A$, the induced projective-linear representation of $M(0,2n)$ is a TQFT representation, as follows from the skein-theoretic construction of Witten-Reshetikhin-Turaev TQFT in \cite{BHMV}.
}\end{rem}

By abuse of notation, we denote the induced homomorphism $M(0,2n) \rightarrow \PGL_d(\BC)$, which sends $h_i$ to $\rho(\sigma_i)$, again by $\rho$.  Thus we have realized condition (i) of the strategy outlined in \S 2. To realize condition (ii), it suffices to find an element $\phi\in B_{2n}$ so that $\rho(\phi)$ has infinite order in $\PGL_d(\BC).$ We now show that $\phi = \sigma_1^{2}\sigma_2^{-2}$ works.

Recall the diagrams $D_i$ ($i=1,2$) depicted above.  By taking disjoint union of $D_i$ with some fixed $(0,2n-4)$-tangle diagram $\widetilde D$ (so that the first $4$ points are the boundary of $D_i$, and the remaining $2n-4$ points are the boundary of $\widetilde D$), we get two diagrams $D'_1$ and $D'_2$ which form part of a basis of our skein module. The two-dimensional subspace spanned by $D'_1$ and $D'_2$ is preserved by both $\rho(\sigma_1)$ and $\rho(\sigma_2)$. On this subspace, $\rho(\sigma_1)$ and $\rho(\sigma_2)$ act by the following matrices:
$$\rho(\sigma_1) = \left[ \begin{array}{cc}
-A^3 & A \\
0 & A^{-1} \end{array} \right] \hskip 1cm 
\rho(\sigma_2) = \left[ \begin{array}{cc}
A^{-1} & 0 \\
A & -A^3 \end{array} \right]$$ (This follows immediately from the Kauffman relations.) 
A straightforward calculation now gives that the matrix of $\rho(\sigma_1^{2}\sigma_2^{-2})$ acting on this $2$-dimensional subspace is 
$$M= \left[ \begin{array}{cc}
 2-A^4-A^{-4} +A^8 & -A^{-2}+A^{-6} \\
A^{-2} - A^{-6} &  A^{-8}\end{array} \right]$$

Clearly, if $M$ has infinite order in $\PGL_2(\BC)$, then $\rho(\sigma_1^{2}\sigma_2^{-2})$ has infinite order in  $\PGL_d(\BC)$.

\begin{lem}\lbl{3.3} $M$ has infinite order in $\PGL_2(\BC)$ provided the order $r$ of the root of unity $q=A^4$ satisfies $r\geq 3$ and $r\not\in\{4,6,10\}$. 
\end{lem}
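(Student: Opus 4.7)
The plan is to reduce the finite-order question to a statement about $\tr(M)$ and apply Kronecker's theorem on real algebraic integers with all Galois conjugates in $[-2,2]$.

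A direct calculation gives $\det M = 1$ and, with $q = A^4$ and $x = q + q^{-1}$,
$$\tr(M) = 2 + q^2 + q^{-2} - q - q^{-1} = x^2 - x.$$
An element of $\SL_2(\BC)$ has finite order in $\PGL_2(\BC)$ iff either it equals $\pm I$ or it is diagonalizable with trace $2\cos(\pi\alpha)$ for some $\alpha \in \BQ$; the remaining case of trace $\pm 2$ with matrix $\neq \pm I$ produces a non-trivial unipotent, hence infinite order. Kronecker's theorem further says that a real algebraic integer $\beta$ equals $2\cos(\pi\alpha)$ for some $\alpha \in \BQ$ iff all its Galois conjugates lie in $[-2, 2]$.

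Since $q$ is a primitive $r$-th root of unity, $x$ lives in $\BQ(\zeta_r + \zeta_r^{-1})$, and the Galois conjugates of $\tr M = x^2 - x$ are the numbers $y^2 - y$ with $y = 2\cos(2\pi j/r)$, $\gcd(j,r)=1$. An elementary inequality gives $y^2 - y \in [-2,2]$ iff $y \in [-1, 2]$, equivalently $\cos(2\pi j/r) \geq -1/2$, i.e., $j \bmod r \notin (r/3, 2r/3)$. The task thus reduces to showing that for $r \geq 3$ with $r \notin \{3, 4, 6, 10\}$, some integer $j$ coprime to $r$ lies in $(r/3, 2r/3)$. By Bertrand's postulate, for $r \geq 4$ the interval $(r/3, 2r/3)$ contains a prime $p$; the relation $p \mid r$ combined with $r/3 < p < 2r/3$ forces $r = 2p$. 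For $r = 2p$ with $p \geq 7$, $j = p-2$ is odd, coprime to $p$, and lies in $(2p/3, 4p/3)$, hence is coprime to $r = 2p$. So the only exceptional $r \geq 3$ are $r = 3$ (where $(1,2)$ contains no integer) and $r = 2p$ for $p \in \{2,3,5\}$, i.e., $r \in \{4, 6, 10\}$.

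For $r \geq 3$ outside this exceptional set, Kronecker yields $\tr M \neq 2\cos(\pi\alpha)$ for any $\alpha \in \BQ$, so $M$ has infinite order in $\PGL_2(\BC)$. The remaining case $r = 3$ gives $x = -1$ and $\tr M = 2$; a direct substitution (for instance $A = e^{i\pi/6}$) confirms $M \neq I$, so $M$ is a non-trivial unipotent, again of infinite order. The main obstacle is arguably the combinatorial classification of the exceptional $r$, which succumbs cleanly to Bertrand's postulate.
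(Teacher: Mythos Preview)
Your proof is correct and follows the same overall strategy as the paper: compute $\det M=1$ and $\tr M = 2 - q - q^{-1} + q^2 + q^{-2}$, observe that finite order in $\PGL_2(\BC)$ forces the eigenvalues to be roots of unity (hence $|\tr M|\le 2$ for every Galois conjugate), and then exhibit a conjugate with $|\tr M|>2$; both arguments treat $r=3$ separately as the non-trivial unipotent case. The one substantive difference is that the paper simply cites \cite{Madeira} for the existence of a primitive $r$-th root $q$ with $|t|>2$ when $r\ge 5$, $r\notin\{6,10\}$, whereas you carry this out explicitly and self-containedly: rewriting $\tr M = x^2 - x$ with $x = q+q^{-1}$ reduces the question to finding $j$ coprime to $r$ with $j\in(r/3,2r/3)$, and Bertrand's postulate cleanly isolates the exceptional set $\{3,4,6,10\}$. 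Your invocation of Kronecker's theorem is harmless overkill---only the easy direction (if $\beta=2\cos(\pi\alpha)$ with $\alpha\in\BQ$ then all conjugates lie in $[-2,2]$) is actually used.
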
 

\begin{proof} For $r\geq 5$ and $r\not\in\{6,10\}$, this is shown in \cite{Madeira}, as one can check that the matrix $M$ is conjugate to the one computed in \cite{Madeira}. We can also apply the argument of \cite{Madeira} directly to our matrix, as follows. Note that $M$ 
has determinant $1$ and trace 
$$t=2-q-q^{-1}+q^2+q^{-2}$$ where $q=A^4$. If $M$ has finite order in $\PGL_2(\BC)$, then its eigenvalues $\lambda$ and $\lambda^{-1}$ must satisfy $\lambda^N=\lambda^{-N}$ for some $N$, so $\lambda$ is a root of unity. But this is impossible, as we can find a primitive $r$-th root $q\in \BC$ such that $|t|=|\lambda + \lambda^{-1}|>2$ (see \cite{Madeira}). Thus $M$ has infinite order in $\PGL_2(\BC)$.

In the remaining case $r=3$, it suffices to observe that in this case we have  $t=2$, so $M$ is conjugate to $$\left[ \begin{array}{cc}
1 & c \\
0 & 1 \end{array} \right]$$ with $c\neq 0$ (since $M$ is not the identity matrix). 
\end{proof}

The proof of Theorem~\ref{1.2} is now completed as follows. For $m\geq 6$, we choose $A$ to be a primitive $N$-th root of unity, as follows:
\begin{enumerate}
\item[$\bullet$] For $m=6$, we take $N=12$.
 \item[$\bullet$] For $m=10$, we take $N=20$.
\item[$\bullet$] For odd $m\geq 7$, we take $N=8m$.
\item[$\bullet$] For even $m\geq 8$, $m \neq 10$, we take $N=4m$.
\end{enumerate}

Then $P_m(A)=0$, so Prop.~\ref{3.1} applies. Also  $q=A^4$ has order $r\geq 3$,  $r\not\in\{4,6,10\}$, so Lemma~\ref{3.3} applies. Thus $\rho$ satisfies condition (i) because of Prop.~\ref{3.1}, and $\rho$ satisfies condition (ii) because the  matrix $\rho(\sigma_1^{2}\sigma_2^{-2})$ has infinite order in $\PGL_d(\BC)$.
This completes the proof.

\begin{rem}\lbl{remaining}{\em I expect that the remaining case ($m=5$, $2n \geq 6$) of Stylianakis' theorem (see Theorem~\ref{1.1}) can also be proved using the skein-theoretic representation $\rho$ evaluated at a root of unity $A$ so that $P_5(A)=0$. It suffices to find $\phi\in B_6$ so that the $5\times 5$ matrix $\rho(\phi)$ has infinite order in $\PGL_5(\BC)$. This will imply the result for $M(0,2n)$  with $2n \geq 6$ for the same reason as above.  Stylianakis describes such an element $\phi$ and shows that it has infinite order in the Jones representation he uses. Actually $\phi$ is closely related to the element originally used by Humphries \cite{H}.   Note that {\em modulo} identifying our skein-theoretic representation of $M(0,6)$ with the Jones representation used by Humphries, the fact that $\rho(\phi)$ has infinite order is already shown by  Humphries. There seems to be no advantage in redoing the relevant $5\times 5$ matrix computation directly from the skein-theoretic approach, and I have not attempted to do so. 
}\end{rem}

\begin{rem}{\em  The proof of Prop.~\ref{3.2} shows that one can rescale $\rho$ to get a representation $\hat \rho$ of $B_{2n}$ which descends to $M(0,2n)$ as a {\em linear} representation: put $$\hat \rho(\sigma_i) = \theta^{-1}  \rho(\sigma_i)$$ where $\theta^{4n-2} = (-A^3)^{2}=A^{6}$; then $\hat\rho(R_1)=\hat\rho(R_2)=\Id.$  Note that  $$\hat\rho(\sigma_i^m)=(\theta A)^{-m} \Id~. $$ One may wonder whether $\theta$ can be chosen so that $(\theta A)^{-m}=1$. In general, the answer is no. For example, if $m$ is odd, then $P_m(A)=0$ implies $A^{4m}=-1$, and one computes (using $\theta^{4n-2} =A^{6}$) that 
$$((\theta A)^{-m})^{4n-2}= A^{-4m(n+1)}=(-1)^{n+1}~.$$ Thus $(\theta A)^{-m}\neq 1$  if $m$ is odd and $n$ is even.
}\end{rem} 



\begin{thebibliography}{GM4}





\bibitem[BHMV]{BHMV} {\sc  C. ~Blanchet, N. ~Habegger, G. ~Masbaum,  P. ~Vogel.} Topological quantum field theories derived from the Kauffman bracket,  {\em Topology}   {\bf 34} (1995), 883-927


\bibitem[Bi]{Bi} {\sc J. Birman.} Braids, Links, and Mapping Class Groups. {\em Ann. of Math. Studies} {\bf 82}, Princeton University Press, 1975

\bibitem[F]{F} {\sc L. Funar.} On the TQFT representations of the mapping class groups. {\em 
   Pacific J. Math.} {\bf 188} (1999), 251 - 274. 


\bibitem[J]{J}  {\sc V. F. R. Jones.} Hecke Algebra Representations of Braid Groups and Link Polynomials. {\em Annals of Mathematics} {\bf 126}  (1987), 335 - 388 
\bibitem[H]{H} {\sc S. P. Humphries.} Normal closures of powers of Dehn twists in mapping class groups. {\em  Glasgow Math. J.} {\bf 34} (1992), 313 - 317. 

\bibitem[K]{K}  {\sc L. H. Kauffman.}  State models and the Jones polynomial. {\em
Topology} {\bf 
26} (1987), 395 - 407. 

\bibitem[M]{Madeira}  {\sc G. Masbaum.} An element of infinite order in TQFT-representations of mapping class groups. {\em Contemp. Math.} {\bf 233} (1999) 137-139. (Proceedings of the Conference on Low Dimensional Topology in Funchal, Madeira, 1998.) Preprint version available at {\tt https://webusers.imj-prg.fr/$\sim$gregor.masbaum/} 








\bibitem[Sa]{Sa} {\sc R. Santharoubane.} Limites homologiques de repr\'esentations quantiques et applications \`a la conjecture AMU. {\em Doctoral Thesis, Paris, 2015}, available at {\tt https://sites.google.com/site/ramanujansantharoubane/}

\bibitem[St]{S} {\sc C. Stylianakis.} The normal closure of a power of a half-twist has infinite index in the mapping class group of a punctured sphere. {\tt arXiv:1511.02912}



\end{thebibliography}
\end{document}